\theoremstyle{definition}
\newtheorem{definition}{Definition}
\newtheorem{remark}[definition]{Remark}
\newtheorem{example}[definition]{Example}
\theoremstyle{mytheorem}
\newtheorem{theorem}[definition]{Theorem}
\newtheorem{lemma}[definition]{Lemma}
\newtheorem{assumption}[definition]{Assumption}
 \newcommand{\W}{\mathscr{W}}
\newcommand{\equlaw}{\stackrel{(d)}{=}}
\renewcommand{\P}{\mathbf{P}} 
\newcommand{\E}{\mathbf{E}}
\renewcommand{\S}{\mathcal{S}}
\newcommand{\Per}{{\rm Per}}
\newcommand{\Lip}{{\rm Lip}}
\newcommand{\Vol }{{\rm Vol}}
\newcommand{\corners}{{\rm corners}}
\renewcommand{\v}{\mathbf{v}}
\renewcommand{\u}{\mathbf{u}}
\newcommand{\N}{\mathscr{N}}
\newcommand{\p}{\mathsf{P}}
\newcommand{\n}{\mathbf{n}}
\newcommand{\C} {\mathcal{C} }
\newcommand{\EC}{Euler characteristic}
\title{Bicovariograms and \EC~of random fields excursions}
 \author{ {Rapha\"el  Lachi\`eze-Rey} \footnote{
 {raphael.lachieze-rey@parisdescartes.fr,Laboratoire MAP5, 45 Rue des Saints-P\`eres, 75006 Paris},
 {Universit\'e Paris Descartes, Sorbonne Paris Cit\'e}}}
\date{}
\begin{document}

 \maketitle

\begin{abstract}
Let $f$ be a $\mathcal{C}^{1}$  bivariate function  with Lipschitz derivatives, and $F=\{x\in \mathbb{R}^{2}:f(x)\geqslant  \lambda \}$ an upper level set of $f$, with $\lambda \in \mathbb{R}$. We present a new identity giving  the \EC~of $F$ in terms of its three-points indicator functions. A bound on the number of connected components of $F$ in terms of the values of $f$ and its gradient, valid in higher dimensions, is also derived. In dimension $2$, if $f$ is a random field, this bound allows to pass the former identity to expectations   if $f$'s partial derivatives have Lipschitz constants with finite moments of sufficiently high order, without requiring bounded conditional densities. This approach provides   an expression of the mean \EC~in terms of the field's  third order marginal. Sufficient conditions and explicit formulas are given for Gaussian fields, relaxing the usual $\mathcal{C}^{2}$ Morse hypothesis.\\

\end{abstract}

{\bf MSC classification:} {60G60}, {60G15}, {28A75}, {60D05}, {52A22}\\

{\bf keywords:} { Random fields}, \EC, {Gaussian processes}, {covariograms},{ intrinsic volumes},{ $C^{1,1}$ functions
}

 \section{Introduction} 
 
 The geometry of   random fields excursion sets has been a subject of intense research over the  last two decades. Many authors are concerned with the computation of the mean \cite{AdlSam,AST,AdlTay03, AufBen13}  or variance \cite{EstLeo14,Mar15} of the \EC, denoted by $\chi $ here.

As an integer-valued  quantity, the \EC~can be easily measured and used in many estimation and modelisation procedures. It is an important indicator of the porosity of a random media \cite{AMMS,Hil02, SWRS}, it is used in brain imagery \cite{KilFri,TayWor}, astronomy, \cite{Mar15,Mel90,Schmalzing}, and many other disciplines. 
 See also \cite{ABBSW} for a general review of applied algebraic topology.

 Most of the available works on random fields use the results  gathered in the celebrated monograph \cite{AdlTay07}, or similar variants. In this case, theoretical computations of the \EC~emanate from Morse theory, where the focus is   on the local extrema of the underlying field   instead of the set itself. For the theory to be applicable, the functions must be $\C^{2}$ and satisfy the Morse hypotheses, which conveys some restrictions on the set itself. 

 The expected Euler characteristic  also turned out to be a widely used approximation of the distribution function of the maximum of a Morse random field, and attracted much interest in this direction, see \cite{AdlSam, AufBen13,AzaWsc,TayWor}.
 Indeed, for large $r>0$, a well-behaved field rarely exceeds $r$, and if it does, it is likely to have a single highest peak, which yields that the level set of $f$ at level $r$, when not empty,  is most often simply connected, and has Euler characteristic $1$. Thereby, $\E \chi (\{f\geqslant   r\})\approx\P(\sup f\geqslant   r)$, which provides an additional motivation to compute the mean \EC~of random fields.

Even though \cite{AST} provides an asymptotic expression for some classes of  infinitely divisible fields, most of the tractable formulae concern Gaussian fields. One of the ambitions of this paper is to provide a formula that is tractable in a rather general setting, and also works in the Gaussian realm. There seems to be no particular obstacle to extend these ideas to higher dimensions in a future work.

%\section*{Approach and main result}

Given a set $A\subset \mathbb{R}^{2}$, let $\Gamma (A)$ be the class of its bounded  arc-wise connected components. We say that a set $A$ is \textit{admissible}  if $\Gamma (A)$ and $\Gamma (A^{c})$ are finite, and in this case its \EC~is defined by 
\begin{align*}
\chi (A)=\#\Gamma (A)-\#\Gamma (A^{c}),
\end{align*}
where $\#$ denotes the cardinality of a set.  The theoretical results of Adler and Taylor \cite{AdlTay07} regarding the \EC~of random excursions require second order differentiability of the underlying   field $f$, but the expression of the mean \EC~only involves the first-order derivatives, suggesting that second order derivatives do not matter in the computation of the \EC.
In the words of Adler and Taylor (Section 11.7), regarding their Formula (11.7.6), it is a
{\it rather surprising fact that the [mean \EC~of a Gaussian field] depends on the covariance of $f$ only through some of its derivatives at zero}, the latter referring to first-order partial derivatives. We present here a new method for which the second order differentiability is not needed. The results are valid for  $\mathcal{C}^{1}$ fields with locally Lipschitz derivatives, also called $\mathcal{C} ^{1,1}$ fields, relaxing slightly the classical $\mathcal{C} ^{2}$ Morse framework.

Our   results  exploit the findings of \cite{LacEC1} connecting smooth sets \EC~and variographic tools. For some $\lambda \in \mathbb{R}$ and a   bi-variate function $f$,  define {for $x\in \mathbb{R}^{2}$} the event
\begin{align*}
\delta  ^{\eta  }(x,f,\lambda )=\mathbf{1}_{\{f(x)\geqslant \lambda ,f(x+\eta  \u _{1})<\lambda ,f(x+\eta \u_{2} )<\lambda \}}, \eta  \in \mathbb{R},
\end{align*}
where $(\u_{1},\u_{2})$ denotes the canonical basis of $\mathbb{R}^{2}$, assuming $f$ is defined in these points.  {When $f$ is a random field, let $\bar \delta  ^{\eta  }(x,f,\lambda )$  denote the event $\{ \delta ^{\eta }(x,f,\lambda )=1\}$}. Let us write a corollary of our main result here, a more general statement can be found in Section  \ref{sec:random-excursions}. {Denote by $\Vol^2 $ the Lebesgue measure on $\mathbb{R}^{2}$}. For $W\subset \mathbb{R}^{2}$ and a function $f:W\to \mathbb{R}^{2}$, introduce the mapping $\mathbb{R}^{2}\to \mathbb{R}^{2}$,
\begin{align*}
f_{[W]}(x)=\begin{cases}-\infty$ if $x\notin W\\
f(x)$ otherwise$, 
\end{cases}
\end{align*} 
so that the intersections of level sets of $f$ with $W$ are the level sets of $f_{[W]} .$
\begin{theorem}
%\label{th:conditions-random-excursion}. 
Let $W=[0,a]\times [0,b]$ for some $a,b>0$,  $f$ be a $\mathcal{C}^{1}$ real random field  on $\mathbb{R}^{2}$ with locally Lipschitz partial derivatives $\partial _{1}f,\partial _{2}f$, $\lambda \in \mathbb{R}$, and {let} $F=\{x\in W:f(x)\geqslant \lambda \}$. Assume furthermore that the following conditions are satisfied: \begin{itemize}
\item [(i)]   For some   $\kappa  >0$, for $x\in \mathbb{R}^{2}$,   the random  vector $(f(x),\partial _{1}f (x),\partial_{2}f (x))$ has a density  bounded   by   $\kappa  $ from above on $\mathbb{R}^{3}$.
\item [(ii)]  There is $p>6 $ such that 
\begin{align*}
  \E[ \Lip(f,W)^{p}]<\infty ,\;\E[\Lip( \partial_i f,W)^{p }]<\infty, i=1,2,
\end{align*} 
where $\Lip({g},W )$ denotes the Lipschitz constant {of a vector-valued function $g$} on $W$.
\end{itemize}
Then   $\E[\#\Gamma (F )]<\infty ,\E[\#\Gamma (F^{c})]<\infty ,$   and
\begin{align}
\label{eq:expect-EC-excursion}
\E[ \chi (F )]&= \lim_{\varepsilon \to 0} \sum_{x\in \varepsilon \mathbb{Z} ^{2}}[\P(\bar \delta^{\varepsilon }(x,f_{[W]} ,\lambda ) )-\P(\bar \delta^{-\varepsilon }(x,-f_{[W]} ,-\lambda ) )]\\
\label{eq:expect-EC-covariogram}&=\lim_{\varepsilon \to 0}\varepsilon ^{-2}\int_{\mathbb{R}^{2}}\Big[\P(\bar \delta^{\varepsilon }(x,f_{[W]} ,\lambda ) )-\P(\bar \delta^{-\varepsilon }(x,-f_{[W]} ,-\lambda ) )\Big]dx.
\end{align}{
If $f$ is furthermore stationary, we have
\begin{align*}
\E [\chi (F)]=\overline{\chi }(f,\lambda )\Vol  ^2  (W)+\overline{\Per}(f,\lambda )\Per(W)+\overline{\Vol ^2 }(f,\lambda )\chi (W)
\end{align*}where the volumic Euler characteristic, perimeter and volume $\overline{\chi },\overline{Per},\overline{\Vol^{2}}$ are defined in Theorem \ref{th:stationary-fields}, they only depend on the behavior of $f$ around the origin.}
\end{theorem}

 The right hand side of \eqref{eq:expect-EC-covariogram} is related to the \emph{bicovariogram} of the set $F$, defined by 
\begin{align}
\label{eq:bicovariogram}
\delta _{0}^{x,y}(F)=\Vol  ^2  (F\cap (F+x)^{c}\cap (F+y)^{c}), x,y\in \mathbb{R}^{2},
\end{align}
in that \eqref{eq:expect-EC-covariogram} can be reformulated as 
\begin{align*}
\E\chi (F )=\lim_{\varepsilon \to 0}\varepsilon ^{-2}(\E[\delta _{0}^{-\varepsilon \u_{1},-\varepsilon \u_{2}}(F )]-\E[\delta _{0}^{\varepsilon \u_{1},\varepsilon \u_{2}}(F^{c} )]).
\end{align*} 
This approach seems {to be }new in the literature. It highlights the fact that under suitable conditions, the mean \EC~of random level sets is {linear in} the field's third order marginal. In \cite[Corollary 6.7]{Fu}, Fu gives an expression for the Euler characteristic of a set with positive reach  by means of local topological quantities related to the height function. If the set is the excursion of a random field, this approach is of a different nature, as passing Fu's formula to  expectations would not lead to  an expression depending directly on the field's marginals.

We also give in Theorem \ref{th:bound-pixel-comp-level-set} a bound on the number of connected components of the excursion of $f$, valid in any dimension, which is finer than just bounding by the number of critical points; we could not locate an equivalent result in the literature. This topological estimate is interesting in its own and also applies uniformly to the number of components of 2D-pixel approximations of the excursions of $f$. We therefore  use it here as a  {majoring bound} in the application of Lebesgue's theorem to obtain \eqref{eq:expect-EC-excursion}-\eqref{eq:expect-EC-covariogram}.

It is likely that the results concerning the planar Euler characteristic could be extended to higher dimensions.  See for instance \cite{Sva14}, that paves the way to an extension of the results of \cite{LacEC1} to  random fields on spaces with arbitrary dimension. Also, the uniform bounded density hypothesis is relaxed and allows for the density of the $(d+1)$-tuple $(f(x)  ,\partial _{1}f(x),\dots ,\partial _{d}f(x))$ to be arbitrarily large in the neighborhood of $(\lambda ,0,\dots ,0)$.
Theorem \ref{th:conditions-random-excursion}  features a result where $f$ is defined on the whole space and the level sets are observed through a bounded window $W$, as is typically the case for  level sets of  non-trivial stationary fields, but the intersection with $\partial W$ requires additional notation and care.  See Theorem \ref{th:stationary-fields} for a result tailored to deal with excursions of stationary fields.  

Theorem \ref{th:gauss-general} features the case where $f$ is a Gaussian field  assuming only $\mathcal{C}^{1,1}$ regularity (classical literature {about random excursions} require $\mathcal{C}^{2}$ Morse fields in dimension $d\geqslant 2$, or $\C^{1}$ fields in dimension $1$). Under the additional hypothesis that $f$ is stationary and isotropic, we retrieve in Theorem \ref{th:Gauss-stationary} the classical results of \cite{AdlTay07}.\\
  
  Let us explore other consequences of our results. Let $h:\mathbb{R}\to \mathbb{R}$ be a $\mathcal{C}^{1}$ test function with compact support, {and $F$ as in Theorem \ref{eq:expect-EC-excursion}}. Using the  results  of our paper, it is shown in the follow-up article  \cite{Lac-KacRice}  that for any deterministic $\mathcal{C}^{2}$ Morse function $f$ on $\mathbb{R}^{2}$, 
   \begin{align}
   \label{eq:kac-rice}
 \int_{\mathbb{R}}\chi (F)h(\lambda )d\lambda =-\sum_{i=1}^{2}\int_{W}\mathbf{1}_{\{\nabla f(x)\in Q_{i}\}}[h'(f(x))\partial _{i}f(x)^{2}+h(f(x))\partial _{ii}f(x)]dx+\text{\rm{boundary terms}}
\end{align}
where 
\begin{align*}
Q_{1}=\{(x,y)\in \mathbb{R}^{2}:y<x<0\},\; Q_{2}=\{(x,y)\in \mathbb{R}^{2}:x<y<0\},
\end{align*}
yielding applications for instance to shot-noise processes. In the context of random functions, {no marginal density hypothesis  is required to take the expectation}, at the contrary of analogous results, including those from the current paper.
 Bierm\'e \& Desolneux \cite[Section 4.1]{BieDesEuler} later gave another interpretation of \eqref{eq:kac-rice}, showing   that if it is extended to a random isotropic stationary field which gradient does not vanish a.e. a.s., it can be rewritten as a  simpler expression, after appropriate integration by parts, namely  
\begin{align*}
\E\left[
 \int_{U}\chi (\{f\geqslant \lambda \};U)h(\lambda )d\lambda
\right] = \Vol  ^2  (U)\E\left[
h(f(0)) [ -\partial _{11}f(0) +4 \partial _{12}f(0)\partial _{1}f(0)\partial _{2}f(0)\|\nabla f(0)\|^{-2} ]
\right],
\end{align*}
where $U$ is an appropriate open set, and $\chi (\{f\geqslant \lambda ;U\})$ is the total  curvature of the level set $\{f\geqslant \lambda \}$ within $U$, generalizing the Euler characteristic. They obtained this result by totally different means, via an approach involving Gauss-Bonnet theorem,   without any requirement on $f$ apart from being $\mathcal{C}^{2}$.

     \section{Topological approximation}
     \label{sec:regular-functions}

Let $f$ be a function of class $\mathcal{C}^{1}$ over some window $W\subset \mathbb{R}^{d}$, and $\lambda \in \mathbb{R}$. Define
\begin{align*}
F:=F_{\lambda}(f)=\{x\in W:f(x)\geqslant  \lambda \},\hspace{1cm}F_{\lambda ^{+}}(f)=\{x\in W:f(x)>\lambda \}.
\end{align*} Remark that $F_{\lambda ^{+}}(f)=(F_{-\lambda }(-f))^{c}$. If we assume that $\nabla f$ does not vanish on $f^{-1}(\{\lambda \})$, then $\partial F_{\lambda }(f)=\partial F_{\lambda ^{+}}(f)=f^{-1}(\{\lambda \})$, and this set is furthermore Lebesgue-negligible, as a $(d-1)$-dimensional  manifold.

 According to  \cite[4.20]{Fed}, $\partial F_{\lambda }(f)$ is regular in the sense that { its boundary is $\C^{1}$ with Lipschitz normal, }if  $\nabla f$ is locally Lipschitz and does not vanish on $\partial F_{\lambda }(f)$. This condition is necessary to prevent $F$ from having locally infinitely many connected components, which would make \EC~not properly defined in dimension $2$,    see \cite[Remark 2.11]{LacEC1}. We call $\mathcal{C}^{1,1}$ function a differentiable function {whose} gradient is a locally Lipschitz mapping. Those functions have been mainly used in optimization problems, and as solutions of some PDEs, see for instance \cite{HSN}. They can also be characterized as the functions which are locally \textit{semiconvex} and \textit{semiconcave}, see \cite{CanSin}. 
 
The results of \cite{LacEC1} also yield that the Lipschitzness of $\nabla f$ is  sufficient for the digital  approximation of $\chi (\{f\geqslant \lambda \}) $ to be valid. It seems therefore that the $\C^{1,1}$ assumption  is the minimal one ensuring   the \EC~to be computable in this fashion.

\subsection{Observation window} An aim of the present paper is to advocate the power of variographic tools for computing intrinsic volumes of random fields excursions. Since many applications are  concerned with stationary random fields on the whole plane, we have to study the intersection of excursions with bounded windows, and assess the quality of the approximation.

To this end, call \emph{rectangle} of $\mathbb{R}^{d}$  any set $W=I_{1}\times \dots \times I_{d}$ where the $I_{k}$  are possibly infinite closed intervals of $\mathbb{R}$ with non-empty interiors, and let $\corners(W )$, which number is between $0$ and $2^{d}$, be the points having extremities of the $I_{i}$ as coordinates. Then call \textit{polyrectangle} a finite union 
 $
W=\cup _{i}W_{i}
$ where each $W_{i}$ is a rectangle, and for $i\neq j, \corners(W _{i})\cap \corners(W _{j})=\emptyset $. Call $\W_{d}$  the class of   polyrectangles. 

  For  $W\in \W_{d}$ and $x\in W$, let $I_{x}(W)=\{1,\dots ,d\}$ if $x\in \text{\rm{int}}(W)$, and otherwise let  $I_{x}(W)\subset \{1,\dots ,d\}$ be the set of indices $i$ such that  $x+\varepsilon \u_{i}\in \partial W$ and $x-\varepsilon \u_{i}\in \partial W$ for arbitrarily small $\varepsilon >0$, where $\u_{i}$ is the $i$-th canonical vector of $\mathbb{R}^{d}$. Say then that $x$ is a $k$-dimensional point of $W$ if $ | I_{x}(W) | =k.$ Denote by $\partial _{k}W$ the set of $k$-dimensional points, and call $k-$dimensional facets the connected  components of $\partial _{k}W$. Remark that $I_{x}(W)$ is constant over a given facet.  Note that $\partial _{d}W=\text{\rm{int}}(W)$ and $\partial W=\cup _{k=0}^{d-1}\partial _{k}W$. We extend the notation $\text{\rm{corners}}(W)=\partial _{0}W.$ An alternative definition is that a subset $F\subset W$ is a   facet of $W$ if it is a maximal relatively open subset of a   affine subspace of $\mathbb{R}^{d}$.

 % An alternative definition is that a subset $F\subset W$ is a  facet of $W$ if it is a maximal open smooth submanifold of $W$. 

\begin{definition}
Let $W\in \W_{d}$, and $f:W\to \mathbb{R}$ be of class ${\C^{1,1}}$. Say that   $f$ is \emph{regular within $W$  } at some level $\lambda \in \mathbb{R}$  if for $0\leqslant k\leqslant d$, $\{x\in \partial _{k}W:f(x)=\lambda ,\partial _{i}f(x)=0,i\in I_{x}(W)\}=\emptyset $, or equivalently if for every $k$-dimensional  facet $G$ of $W$, the $k$-dimensional gradient of the restriction of $f$ to $G$ does not vanish on $f^{-1}(\{\lambda \})\cap G$.
\end{definition}

For such a function $f$ in dimension $2$, it is shown in \cite{LacEC1} that  the Euler characteristic of its excursion set $F=F_{\lambda }(f)\cap W$    can be expressed by means of its bicovariograms, defined in \eqref{eq:bicovariogram}.  For $\varepsilon>0 $ sufficiently small 
\begin{align} \label{eq:deterministic-expression-chi}
\chi (F ) &=  \varepsilon ^{-2}[\delta _{0}^{-\varepsilon \u_{1},-\varepsilon \u_{2}}(F )-\delta _{0}^{\varepsilon \u_{1},\varepsilon \u_{2}}( F  ^{c})].
\end{align} 
The proof is based on the Gauss approximation  of $F$:
\begin{align*}
F^{\varepsilon }=\bigcup _{x\in \varepsilon \mathbb{Z} ^{2}\cap F}\left(
x+\varepsilon [-1/2,1/2)^{2}
\right).
\end{align*}
  According to \cite[Theorem 2.7]{LacEC1}, 
  for $\varepsilon $ sufficiently small, 
\begin{align*}
\chi (F )&=\chi ( F ^{\varepsilon })\\
&=\sum_{x\in \varepsilon \mathbb{Z} ^{2}}(\delta  ^{\varepsilon }(x,f_{[W]},\lambda )-\delta  ^{-\varepsilon }(x,-f_{[W]},-\lambda ))\\
&=\varepsilon ^{-2}\int_{\mathbb{R}^{2}}(\delta  ^{\varepsilon }(x,f _{[W]},\lambda )-\delta ^{-\varepsilon }(x,-f _{[W]},-\lambda ))dx.
\end{align*}  If $f$ is a random field, the difficulty to pass the result to expectations is to majorize the right hand side uniformly in 
 $\varepsilon $ by an integrable quantity, and this goes through bounding the number of connected components of $F $ and its approximation $F ^{\varepsilon }$. This is the object of the next section.

 \subsection{Topological estimates}

  The next result, valid in  dimension $d\geqslant 1,$   does not concern directly the Euler characteristic. Its purpose is to bound the number of connected components of $F_{\lambda }(f)\cap W$ by an expression depending on $f$ and its partial derivatives. It turns out that a similar bound holds for the excursion approximation $(F_{\lambda}(f)\cap W)^{\varepsilon }$ in dimension $2$, uniformly in $\varepsilon $, enabling the application of Lebesgue's theorem to the point-wise convergence \eqref{eq:deterministic-expression-chi}.  

Traditionally, see for instance \cite[Prop. 1.3]{EstLeo14}, the number of connected components of the excursion set, or its Euler characteristic, is  bounded  by using the number of critical points, or by the number of points on the level set where $f$'s gradient points towards a predetermined direction.  Here,  we use another method based on the idea that in a small connected component, a critical point is necessarily close to the boundary, where $f-\lambda $ vanishes. It yields the expression \eqref{eq:bound-gamma-level-set-continuous} as a bound on the number of connected components. It also allows  in Section \ref{sec:random-excursions}, devoted to random fields, to relax the usual uniform density assumption on the marginals of the $(d+1)$-tuple $(f ,\partial _{i}f,i=1,\dots ,d)$, leaving the possibility that the density is unbounded around $(\lambda ,0,\dots ,0)$.

Denote by $\Lip(g;A)\in \mathbb{R}_{+}\cup \{\infty \}$, or just $\Lip(g),$ the Lipschitz constant of a mapping $g$ going from a metric space $A$ to another metric space.   Let $W\in \W_{d}, g:W\to \mathbb{R}$,   $\C^{1}$ with Lipschitz derivatives.  Denote by $\mathcal{H}_{d}^{k}$ the $k$-dimensional Hausdorff measure in $\mathbb{R}^{d}$.
 Define the possibly infinite quantity, for $1\leqslant k\leqslant d,$
\begin{align*}
I_{k}(g;W) :&=  \max(\Lip(g),\Lip(\partial _{i}g)   ,1\leqslant i\leqslant d)^{k}
\int_{\partial _{k}W}  \frac{\mathcal{H}_{d}^{k}(dx)}{
\max\left(
  | g(x) | ,| \partial _{i}g(x) | 
,i\in I_{x}(W)\right)^{ k}}
,
\end{align*}  and $I_{0}(g;W):=\#\text{\rm{corners}}(W)$. {Put $I_{k}(g;W)=0$ if $\Lip(g)=0$ and $g$ vanishes}, $1\leqslant k\leqslant d$.
  \begin{theorem}
\label{th:bound-pixel-comp-level-set}

Let $W\in \W_{d}$, and $f:W\to \mathbb{R}$ be a $\mathcal{C} ^{1,1}$ function. Let $F=F_{\lambda }(f)$ or $F=F_{\lambda ^{+}}(f)$ for some $\lambda \in \mathbb{R}$. Assume that $f$ is regular at level $\lambda $ in $W$. \begin{itemize}
\item  
[{(i)}] For $d\geqslant 1,$ \begin{align}
\label{eq:bound-gamma-level-set-continuous}
\#\Gamma (F\cap W)&\leqslant   \sum_{k=0}^{d}2^{k} \kappa_{k} ^{-1}I_{k}(f-\lambda;W ),
 \end{align}where $\kappa _{k}$ is the volume of the $k$-dimensional unit ball. 

\item [{(ii)}] If $d=2$,  
\begin{align}
\label{eq:bound-gamma-level-set-pixel}
\#\Gamma ((F\cap W)^{\varepsilon }) \leqslant C\sum_{k=0}^{2}I_{k}(f-\lambda ;W)
\end{align}for some $C>0$ not depending on $f,\lambda,$ or $\varepsilon  $.
\end{itemize}
\end{theorem}
    The proof is given in Section \ref{sec:proofs}.
  
   \begin{remark} Theorem \ref{th:conditions-random-excursion} gives conditions on the marginal densities of a bivariate random field ensuring  that the term on the right hand side has finite expectation.
\end{remark}

\begin{remark} Similar results hold if   partial derivatives of $f$ are only assumed to be H\"older-continuous, i.e. if there is $\delta >0$ and $H_{i}>0,i=1,\dots ,d,$ such that $\|\partial _{i}f(x)-\partial _{i}f(y)\|\leqslant H_{i}\|x-y\|^{\delta }$ for $x,y$ such that $[x,y]\subset W$. Namely, we have to change constants and replace the exponent $k$ in the  {\it max} by an exponent  $k\delta $. We do not treat such cases here because, as noted at the beginning of Section \ref{sec:regular-functions}, if the partial derivatives are not Lipschitz, the upper level set is not regular enough to compute the \EC~from the bicovariogram, but the proof is similar to the $\mathcal{C}^{1,1}$ case.
\end{remark}

\begin{remark}
\label{rk:bound-levelset}
Calling $B $ the right hand term of \eqref{eq:bound-gamma-level-set-pixel} and noticing that $F_{\lambda^{+} }(f)^{c}$ is an upper level set of $-f$, an easy reasoning yields (see \cite[Remark 2.13]{LacEC1})
\begin{align*}
  | \chi ((F_{\lambda }(f)\cap W)^{\varepsilon }) |&\leqslant  2B. 
\end{align*}
\end{remark}

%\notessential 

\section{Mean \EC~of random excursions}
\label{sec:random-excursions}

We call here $\C^{1}$ random field over a set $\Omega \subseteq \mathbb{R}^{d}$  a {separable random field} $ (f(x);x\in \Omega ) $, such that in each point $x\in \Omega $, the limits 
\begin{align*}
\partial _{i}f(x):=\lim_{s \to 0}\frac{f(x+s\u_{i})-f(x)}{s},\;i=1,2,
\end{align*}exist a.s., and the fields $(\partial _{i}f(x),x\in \Omega),i=1,\dots ,d$, are a.s. {separable with continuous sample paths}. See \cite{Adl81,AdlTay07} for a discussion on the regularity properties of random fields.
Say that the random field is $\mathcal{C}^{1,1}$ if the partial derivatives are a.s. locally Lipschitz.

Many sets of  conditions allowing to take the expectation in \eqref{eq:deterministic-expression-chi} can be derived from Theorem  \ref{th:bound-pixel-comp-level-set}. We give below a compromise between optimality and compactness.

\begin{theorem}
\label{th:conditions-random-excursion}Let $W\in \W_{d}$ {bounded},
and let $f$ be a $\mathcal{C}^{1,1}$ random field on $W$, $\lambda \in \mathbb{R}, {F=\{x\in W:f(x)\geqslant \lambda \}}$.  Assume that the following conditions are satisfied: \begin{itemize}
\item [(i)]   For some   $\kappa  >0 ,\alpha >1$, for $1\leqslant k\leqslant d,x\in\partial _{ k} W,I\subset \mathcal{I}_{ k}$,  the   random  $( k+1)$-tuple $(f(x)-\lambda ,\partial _{i}f (x),i\in I)$ satisfies
\begin{align*}
\P( | f(x) -\lambda | \leqslant \varepsilon , | \partial _{i}f(x) | \leqslant \varepsilon ,i\in I)\leqslant \kappa \varepsilon ^{\alpha {k}},\varepsilon >0,
\end{align*} \item [(ii)]  for some $p>  d\alpha  (\alpha   -1)^{-1},  $
\begin{align*}
  \E[ \Lip(f)^{  p}]<\infty ,\;\E[\Lip(\partial_i f)^{p  }]<\infty,i=1,\dots ,d .
\end{align*} 
\end{itemize}
Then    $ \E[\#\Gamma (F )]<\infty ,\E[\#\Gamma (F^{c})]<\infty $ and $f$ is a.s. regular within $W$ at level $\lambda $.
 In the context $d=2$,  
\eqref{eq:expect-EC-excursion}-\eqref{eq:expect-EC-covariogram}  give the mean Euler characteristic.

\end{theorem}

\begin{remark} 
In the case where the $\Lip(f),\Lip(\partial _{i}f),i=1,\dots ,d$ have a finite  moment of  order $>d(d+1)$, the hypotheses are satisfied if for instance $(f(x)-\lambda ,\partial _{i}f(x),1\leqslant i\leqslant d)$ has a uniformly bounded multivariate density, in which case $\alpha =(d+1)/d$ is suitable. If $\alpha <(d+1)/d$,    higher  moments for the Lipschitz constants are required.
\end{remark}

 The proof is deferred to Section \ref{sec:proofs}. We give an explicit expression in   the case where $f$ is stationary. Boundary terms  involve the perimeter  of $F$, introduced below. Denote by $\C_{c}^{1}$ the class of compactly supported $\C^{1}$ functions on $\mathbb{R}^{2}$ {endowed with the norm $\|\varphi \|=\sup_{x\in \mathbb{R}^{2}} | \varphi (x) | $}. For a measurable set $A\subset \mathbb{R}^{2}$, and $\u\in \S^{1}$, the unit circle in $\mathbb{R}^{2}$, define the variational perimeter of $A$ in direction $\u$ by
\begin{align*}
\Per_{\u}(A)=\sup_{\varphi \in \mathcal{C}_{c}^{1 }:\|\varphi \|\leqslant 1  }\int_{A}\langle \nabla \varphi(x) ,\u\rangle dx.
\end{align*}{Recall that $(\u_{1},\u_{2})$ is the canonical basis of $\mathbb{R}^{2}$, and introduce }the $\|\cdot \|_{\infty }$-perimeter 
\begin{align*}
\Per_{\infty }(A)=\Per_{\u_{1}}(A)+\Per_{\u_{2}}(A),
\end{align*}named so because it is the analogue of the classical perimeter when the Euclidean norm is replaced by the $\|\cdot \|_{\infty }$-norm, see \cite{GalLac}.
 \begin{theorem}
 \label{th:stationary-fields}
Let $f$ be a $\mathcal{C}^{1,1}$ stationary random field on $\mathbb{R}^{2},$ $\lambda \in \mathbb{R}$,  $W\in \W_{2}$ bounded. Assume that $(f(0),\partial _{1} f(0),\partial _{2}f(0))$ has a bounded density, and that there is $p>6 $ such that  \begin{align*}
\E\Big[\Lip(f;W)^{ p}\Big] <\infty ,\;\E\Big[\Lip(\partial _{i}f;W)^{p }\Big]<\infty ,i=1,2.
\end{align*}
Then the following limits exist:  
\begin{align*}
\overline{\chi }(f{,\lambda }  )&:=\lim_{\varepsilon \to 0}\varepsilon ^{-2}\Big[\P(\bar \delta^{\varepsilon }(0,f,\lambda ))-\P(\bar \delta^{-\varepsilon }(0,-f,-\lambda))\Big],\\
\overline{\Per_{\u_{i}}}(f {,\lambda } )&:=\lim_{\varepsilon \to 0}\varepsilon ^{-1}\P(f(0)\geqslant \lambda ,f(\varepsilon \u_{i})<\lambda ),i=1,2,\\
\overline{\Vol ^2 }(f{,\lambda }  )&:=\P(f(0)\geqslant \lambda ),
\end{align*}and we have, with $\overline{\Per_{\infty }}=\overline{\Per_{\u_{1}}}+\overline{\Per_{\u_{2}}},$  
\begin{align} 
\notag\E [\chi (F_{\lambda }(f)\cap W)]&=\Vol ^2 (W)\overline{\chi} (f{,\lambda })+\frac{1}{4}(\Per_{\u_{2}}(W)\overline{\Per_{\u_{1} }}(f{,\lambda })+\Per_{\u_{1}}(W)\overline{\Per_{\u_{2} }}(f{,\lambda }))\\
\label{eq:stationary-expectation-EC}&\hspace{4cm}+\chi (W)\overline{\Vol ^2 }(f{,\lambda })\\
\label{eq:stationary-expectation-Per}\E[ \Per_{\infty } (F_{\lambda }(f)\cap W)]&=\Vol ^2 (W)\overline{\Per_{\infty }} (f{,\lambda })+\Per_{\infty }(W)\overline{\Vol ^2 }(f {,\lambda }) \\
\label{eq:stationary-expectation-Vol}
\E [\Vol ^2 (F_{\lambda }(f)\cap W)]&=\Vol ^2 (W)\overline{\Vol ^2 }(f{,\lambda }).
\end{align}
  \end{theorem}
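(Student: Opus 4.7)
The strategy is to reduce all three identities to computations involving Theorem \ref{th:conditions-random-excursion} combined with stationarity, extending to the probabilistic setting the deterministic decomposition carried out in \cite[Proposition 2.1]{LacEC1}.

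The volume identity \eqref{eq:stationary-expectation-Vol} is immediate from Fubini and stationarity: $\E\Vol(F\cap W)=\int_W\P(f(x)\leqslant\lambda)dx=\Vol(W)\,\overline{\Vol}(F)$. For the perimeter identity \eqref{eq:stationary-expectation-Per}, I would use the representation
\begin{align*}
\Per_{\u_i}(A)=\lim_{\varepsilon\to 0}\varepsilon^{-1}\bigl[\Vol(A\cap(A-\varepsilon\u_i)^c)+\Vol(A^c\cap(A-\varepsilon\u_i))\bigr],
\end{align*}
valid for $\mathcal{C}^{1,1}$-regular excursions. Taking $A=F\cap W$ and splitting the integral according to whether $x$ and $x-\varepsilon\u_i$ both lie in $W$, one obtains a bulk piece over $W\cap(W-\varepsilon\u_i)$ plus a boundary piece concentrated in a strip of area $\varepsilon\Per_{\u_i}(W)$ along the edges of $W$ with normal $\pm\u_i$. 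By stationarity these contribute, in the limit, $\Vol(W)\overline{\Per_{\u_i}}(F)$ and $\Per_{\u_i}(W)\overline{\Vol}(F)$ respectively. Existence of $\overline{\Per_{\u_i}}(f;\lambda)$ follows from a coarea-type argument: writing $f(\varepsilon\u_i)-f(0)=\varepsilon\int_0^1\partial_i f(s\varepsilon\u_i)ds$ and using the continuity of $\partial_i f$ together with the bounded density hypothesis (i), dominated convergence yields a finite limit of the form $\rho_{f(0)}(\lambda)\,\E[(\partial_i f(0))^+\mid f(0)=\lambda]$.

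For the Euler characteristic identity \eqref{eq:stationary-expectation-EC}, apply Theorem \ref{th:conditions-random-excursion} to $F\cap W$ to obtain
\begin{align*}
\E\chi(F\cap W)=\lim_{\varepsilon\to 0}\varepsilon^{-2}\bigl[\E\delta_0^{-\varepsilon\u_1,-\varepsilon\u_2}(F\cap W)-\E\delta^0_{\varepsilon\u_1,\varepsilon\u_2}(F\cap W)\bigr],
\end{align*}
then expand each polyvariogram into a sum of indicators indexed by whether each of the three involved shifted points lies in $W$. Regrouping the contributions by type, three families appear: (a) the \emph{interior} terms, where all three points lie in $W$: by stationarity they reduce to $\Vol(W_\varepsilon)\varepsilon^{-2}[\E\delta^\varepsilon(0,f,\lambda)-\E\delta^{-\varepsilon}(0,-f,-\lambda)]$ with $W_\varepsilon:=W\cap(W-\varepsilon\u_1)\cap(W-\varepsilon\u_2)$, and as $\varepsilon\to 0$ this yields the bulk term $\Vol(W)\overline{\chi}(F)$, with the limit simultaneously defining $\overline{\chi}(f;\lambda)$; (b) the \emph{edge} terms, where exactly one of the shifted points fails to be in $W$: these are concentrated in thin strips along the edges of $W$ normal to $\u_i$, of one-sided width $\varepsilon$, and the surviving $\varepsilon^{-1}$-scaled indicator is precisely that defining $\overline{\Per_{\u_{i'}}}(F)$; after combining contributions from both bicovariograms the prefactor is $\tfrac14$; (c) the \emph{corner} terms, where both shifted points are outside $W$: they occupy $\varepsilon\times\varepsilon$ squares at the corners of $W$, whose signed count equals $\chi(W)$, and the surviving indicator $\mathbf{1}_{f(x)\leqslant\lambda}$ gives $\chi(W)\overline{\Vol}(F)$.

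The main obstacle lies in the combinatorial bookkeeping of (b) and (c) for a general polyrectangle $W$, whose corners may be convex or concave: one must verify that the signed counting over corners matches $\chi(W)$, that each edge of $W$ contributes through a one-sided $\varepsilon$-strip (yielding a factor $\tfrac12$), and that the subtraction between the two bicovariograms provides a further $\tfrac12$. This was carried out deterministically in \cite[Proposition 2.1]{LacEC1}; here only the additional step of exchanging expectation and the $\varepsilon\to 0$ limit on each piece remains, which is justified by the uniform integrability provided by Theorem \ref{th:bound-pixel-comp-level-set} under hypothesis (ii), together with standard control of the $o(1)$ errors from replacing $\Vol(W_\varepsilon)$ by $\Vol(W)$ and analogous boundary approximations.
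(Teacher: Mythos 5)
Your route is essentially the paper's own: the paper proves this theorem by observing that the proof of Theorem \ref{th:conditions-random-excursion} supplies the finiteness/domination of the relevant expectations (via Theorem \ref{th:bound-pixel-comp-level-set} and hypothesis (ii)) and then invoking the stationary bulk/edge/corner decomposition of the companion paper \cite{LacEC1}, which is exactly the bookkeeping you sketch and ultimately defer to. The only point to watch is your side argument identifying $\overline{\Per_{\u_{i}}}$ with $\rho_{f(0)}(\lambda)\,\E[(\partial_{i}f(0))^{+}\mid f(0)=\lambda]$, which requires more than a bounded joint density at the fixed level $\lambda$ (a Lebesgue-point or continuity assumption, or else deriving existence from the $L^{1}$ convergence of the variogram functionals as in the companion proposition); since the existence of these limits is already delivered by that proposition under the established integrability, this does not affect the argument.
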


The proof of this result requires notation contained in the proof of Theorem \ref{th:conditions-random-excursion}, it is therefore placed at Section \ref{sec:proof-4-2}.
 
%\notessential 

\renewcommand{\v}{\mathbf{v}}

  \section{Gaussian level sets}
\label{sec:gauss} 

 Let $(f(x),x\in W)$ be a centred  Gaussian field on some $W\in \W_{d}$.  Let the covariance function be defined by
\begin{align*}
\sigma (x,y)=\E[ f(x)f(y)],\;x,y\in W.
\end{align*}
Say that some  real function $h$ satisfies the Dudley condition on $D\subset W$ if for some $\alpha  >0$, $ | h(x)-h(y) | \leqslant  | \log(\|x-y\|) | ^{-1-\alpha }$ for $x,y\in W$.
 We will make the following assumption on $\sigma $:
\begin{assumption}
\label{ass:ass-cov}
 {Assume that     $x\in W\mapsto \partial ^{2}\sigma (x,x)  /\partial x_{i}\partial y_{i}$ exists and satisfies the Dudley condition for $ i=1,\dots ,d,$   that  the partial derivatives
 $\partial  ^{4}\sigma(x,x) /\partial _{x_{i}}\partial _{x_{j}}\partial _{y_{i}}\partial _{y_{j}},x\in W$, $1\leqslant i,j\leqslant d$, exist   and that for some finite partition $\{D_{k}\}$ of $W$ they satisfy the Dudley condition over each $D_{k}$.  }
\end{assumption}  
 \begin{theorem}
\label{th:gauss-general}
Let $W\in \W_{d}$ bounded. {Assume that $\sigma $ satisfies Assumption \ref{ass:ass-cov}} and  that for $x\in W$, $(f(x),\partial_i f(x),i=1,\dots ,d)$ is non-degenerate.
%, and that for some $p>d(d+1)$, for $i=1,2,$
%\begin{align*}
%\E \Lip(\partial_i f)^{p }<\infty .
%\end{align*}
 Then for any $\lambda \in \mathbb{R}$, $F=F_{\lambda }(f)$ satisfies the conclusions of Theorem \ref{th:conditions-random-excursion}.
\end{theorem}

\begin{proof} { 
Assumption \ref{ass:ass-cov} and  \cite[Theorem 2.2.2]{Adl81} yield that for $i=1,\dots ,d$, $ (\partial _{i} f(x);x\in W) $ is well defined in the $L^{2}$ sense and is a Gaussian field with covariance functions $  \E[ \partial _{i}f(x)\partial _{i}f(y)]=\partial ^{2}\sigma (x,y) /\partial x_{i}\partial y_{i}$  for $x,y\in W $.  
Since the latter covariance functions satisfy Dudley condition, Theorem 1.4.1 in \cite{AdlTay07} implies   the sample-paths continuity of the partial derivatives.

 Using again  \cite[Theorem 2.2.2]{Adl81}, for $1\leqslant i,j\leqslant d$, $( \partial _{i,j}f(x),x\in D) $ is a well-defined Gaussian field with covariance $\E[ \partial _{i,j}f(x)\partial _{i,j}f(y)]=\partial ^{4}\sigma (x,y)/\partial x_{i}\partial y_{i}\partial x_{j}\partial y_{j}$. 
 For each $k$,  \cite[Theorem 1.4.1]{AdlTay07} again yields that $\partial _{i,j}f$ is continuous and bounded over $D_{k}$, hence $\partial _{i,j}f$ is bounded over $W$. Finally, formula (2.1.4) in \cite{AdlTay07} yields that  $\E\left[
 \sup_{x\in W} | \partial _{i,j}f (x) | ^{p}
\right]<\infty  $ for $p\geqslant 0$. Since $\Lip(\partial _{i}f)\leqslant d\max_{j=1,\dots ,d}\|\partial _{ij}f\|$, Condition (ii) of Theorem \ref{th:conditions-random-excursion} is    satisfied for any $\alpha >1$.
}

To prove (i), put for notational convenience  $f^{(0)}:=f,f^{(i)}=\partial _{i}f, i=1,\dots ,d$.
 We have for $i,j\in \{0,\dots ,d\}$,
\begin{align*}
 | \E    f^{(i)}(x)f^{(j)}(x)&-f^{(i)}(y)f^{(j)}(y) |\\
 &\leqslant \left|
  \E \left[\left(  f^{(i)}(x)-f^{(i)}(y)\right)f^{(j)}(x)\right] 
\right|
+\left|
  \E  \left[
 f^{(i)}(y)\left(
f^{(j)}(x)-f^{(j)}(y)
\right)
\right]
\right|
\\
 &\leqslant \E\left[
 \sup_{W} | f^{(j)} |\Lip( f^{(i)})
\right]\|x-y\|+\E\left[
 \sup_{W} |  f^{(i)} |\Lip(f^{(j)})
\right]\|x-y\|,   
\end{align*}which yields that the covariance function with values in the space of $(d+1)\times (d+1)$ matrices, 
\begin{align*}
x\mapsto \Sigma (x):=\text{cov}(f(x),\partial_i f(x),1\leqslant i\leqslant d)
\end{align*} is Lipschitz on $W$. In particular, since $\det(\Sigma (x))$ does not vanish on $W$, it is bounded from below by some $c >0$, whence the density of $(f(x),\partial_1 f(x),\partial_2 f(x))$, $x\in W$, is uniformly bounded by $(2\pi )^{-d/2}c^{-1/2}$, and assumption (i) from Theorem  \ref{th:conditions-random-excursion} is   satisfied with $\alpha =(d+1)/d$.  \end{proof}

{ 

\begin{example}Random fields that are $\C^{1,1}$ and not $\mathcal{C}^{2}$ naturally arise in the context of smooth interpolation. 
Let $E=\{x_{i};i\in \mathbb{Z} \} $ be a countable set of   points of $\mathbb{R}$, such that $x_{i}<x_{i+1},i\in \mathbb{Z} $. Let $(W(x ),x\in E)$ be a random field on $E$, and $A_{x},B_{x},x\in E $ be random variables on the same probability space.  Define
\begin{align*}
g (y)=\sum_{i\in \mathbb{Z}  }\mathbf{1}_{\{y\in [x_{i},x_{i+1})\}}\left[
A_{x_{i}}\left(
\frac{ y-x_{i} }{x_{i+1}-x_{i}}
\right)^{2}+B_{x_{i}} \frac{y-x_{i}}{x_{i+1}-x_{i}}+W(x_{i}) 
\right].
\end{align*}  Straightforward computations yield that, with $\Delta _{x}=W(x_{i+2})-2W(x_{i+1})+W(x_{i})$, if
\begin{itemize} 
\item $A_{x_{i+1}}=\Delta _{x _{i}}-A_{x_{i}},i\in \mathbb{Z}  ,$
\item $B_{x_{i}}=W(x_{i+1})-W(x_{i})-A_{x},i\in \mathbb{Z}  ,$
\end{itemize}then   with probability $1$, $g$ is a $\C^{1,1}$ and in general not twice differentiable field on  $(\lim_{i\to -\infty }x_{i},\lim_{i\to \infty  }x_{i})$ such that $g(x_{i})=W(x_{i}),i\in \mathbb{Z} $. If for some $i_{0}\in \mathbb{Z} ,$ $ (A_{x_{i_{0}}};W(x_{i}),i\in \mathbb{Z} )$ is a Gaussian process, $g$ is furthermore a Gaussian field. 
%Take for instance $E=\mathbb{N}$, and let $A_{0}$ be a non-degenerate Gaussian variable independent of the $W(x_{k}),k\geqslant 1$.
%Assume furthermore that for $k,p,q\geqslant 0$, $(W(k),\dots ,W(k+q))\stackrel{(d)}{=}(W(k+p),\dots ,W(k+p+q))$, then $g$'s law is invariant under positive integer shifts, i.e. $g(\cdot +q)\stackrel{(d)}{=}g(\cdot )$ for $q\geqslant 0$ in the sens of fi-di distributions.

  Given a Gaussian process  $(g(k);k\in \mathbb{Z} ^{d})$,  it should be possible to carry out a similar approximation scheme in $\mathbb{R}^{d}$  by defining  $g =\sum_{k\in \mathbb{Z}^{d}}\mathbf{1}_{\{x\in (k+[0,1)^{d})\}}g_{k}$ where $g_{k} $ is a bicubic  polynomial interpolation of Gaussian variables $W(j),j\in (k+\{0,1\}^{d})$ on $k+[0,1)^{d}$. A possible follow-up of this work could be to investigate the asymptotic properties of topological characteristics of $g$ when it is the smooth interpolation of an irregular Gaussian field as the grid mesh converges to $0.$

\end{example}
}
 
Let us give the mean \EC~in dimension $2$ under the simplifying assumptions that the law of $f$ 
is invariant under translations and rotations of $\mathbb{R}^{2}$. 
 This implies for instance that in every $x\in \mathbb{R}^{2}$, $f(x),\partial _{1}f(x)$ and $\partial _{2}f(x)$ are independent, see for instance \cite{AdlTay07} Section 5.6 and (5.7.3).  Assumption \ref{ass:ass-cov} is simpler to state in this context: $x\mapsto \partial ^{2}\sigma (x,x)/\partial x_{i}\partial y_{i}$ and $x\mapsto \partial ^{4}\sigma (x,x)/\partial x_{i}\partial x_{j}\partial y_{i}\partial y_{j}$ should exist and satisfy Dudley's condition in $0$. It actually yields that $f$ has $\mathcal{C}^{2}$ sample paths, and it is not clear wether this is equivalent to $\C^{1,1}$ regularity in this framework. For this reason we state the result with the abstract conditions of Theorem \ref{th:stationary-fields}
.

\begin{theorem}
\label{th:Gauss-stationary}
Let $f=(f(x);x\in \mathbb{R}^{2})$ be a $ \C^{1,1}$ stationary isotropic centred Gaussian  field on $\mathbb{R}^{2}$ with $\E[\Lip(\partial _{i}f)^{p}]<\infty $, for some $p>6$. Let $\lambda \in \mathbb{R}$,  $F=\{x:f(x)\geqslant  \lambda \}$, and let $W\in \W_{2}$ bounded.  Let
$\mu =\E[\partial _{1}f(0)^{2}]$, and $\Phi (\lambda )=\frac{1}{\sqrt{2\pi }}\int_{\lambda }^{\infty  }\exp(-t^{2}/2)dt$. Then
\begin{align}
\label{eq:gauss-exp-volume}
\E [\Vol ^2 (F\cap W)]& =\Vol ^2 (W)\Phi (\lambda ),\\
\label{eq:gauss-exp-perimeter}
\E[ \Per_{\infty } (F\cap W)]&=\Vol ^2 (W)2\frac{\sqrt{\mu }}{\pi }\exp(-\lambda ^{2}/2)+\Per_{\infty }(W)\Phi (\lambda ),\\
\label{eq:gauss-exp-EC}
\E[ \chi (F\cap W)]&=\left(  \Vol ^2 (W)\frac{\mu \lambda} {(2\pi )^{3/2}}+\Per_{\infty }(W)\frac{\sqrt{\mu }}{4\pi }  \right) e^{-\lambda ^{2}/2}+\frac{1}{\sqrt{2\pi }}\Phi (\lambda )\chi (W).
\end{align}
\end{theorem}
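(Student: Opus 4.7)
The strategy is to apply Theorem \ref{th:stationary-fields} after verifying Theorem \ref{th:gauss-general}, and then to compute in closed form the three densities $\overline{\Vol}(F)$, $\overline{\Per_{\u_i}}(F)$, $\overline{\chi}(F)$ arising in \eqref{eq:stationary-expectation-Vol}--\eqref{eq:stationary-expectation-EC}. Stationarity, isotropy and $\var f(0) = 1$ force $(f(0), \partial_1 f(0), \partial_2 f(0))$ to be centred Gaussian with covariance matrix $\operatorname{diag}(1, \mu, \mu)$ (differentiating the covariance twice, using parity of the covariance function and isotropy). This triple is non-degenerate as soon as $\mu > 0$, and the moment bound $\E\Lip(\partial_1 f, W)^{4+\eta} < \infty$ extends to $\partial_2 f$ by isotropy, so Theorem \ref{th:gauss-general} applies.

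The formula \eqref{eq:gauss-exp-volume} follows immediately from $\overline{\Vol}(F) = \P(f(0) \leq \lambda)$. For $\overline{\Per_{\u_i}}(F)$, the Lipschitz expansion $f(\varepsilon \u_i) = f(0) + \varepsilon \partial_i f(0) + r_\varepsilon$ with $|r_\varepsilon| \leq \tfrac{1}{2}\varepsilon^2 \Lip(\partial_i f)$, combined with the independence of $f(0)$ and $\partial_i f(0)$ and the elementary estimate $\int_{\lambda - \varepsilon x}^{\lambda} \varphi(t)\, dt = \varepsilon x \varphi(\lambda) + O(\varepsilon^2)$ for $x > 0$, yields the first-order limit; the moment assumption permits dominated convergence, and substitution into \eqref{eq:stationary-expectation-Per} produces \eqref{eq:gauss-exp-perimeter}.

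The key step is $\overline{\chi}(F)$. I condition on $f(0) = \phi$: the pair $(f(\varepsilon \u_1), f(\varepsilon \u_2))$ is then conditionally Gaussian with mean $r(\varepsilon) \phi$, variance $\sigma_\varepsilon^2 = 1 - r(\varepsilon)^2$ and correlation $\rho_\varepsilon$, where $r(\varepsilon) = \E[f(0) f(\varepsilon \u_1)]$. Under $\C^{1,1}$ regularity one has $r(\varepsilon) = 1 - \mu \varepsilon^2/2 + o(\varepsilon^2)$ (so $\sigma_\varepsilon = \sqrt{\mu}\,\varepsilon + o(\varepsilon)$ and $\rho_\varepsilon \to 0$), relying only on the second spectral moment $\mu$. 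Substituting $\phi = \lambda - \varepsilon \sqrt{\mu}\, u$ (respectively $\phi = \lambda + \varepsilon\sqrt{\mu}\, v$) recasts $\E\delta^\varepsilon(0, f, \lambda)$ and $\E\delta^{-\varepsilon}(0, -f, -\lambda)$ as one-dimensional integrals in the variables $u, v \in (0, \infty)$. Expanding to second order in $\varepsilon$ and subtracting, the first-order contributions cancel by symmetry, and the surviving second-order term reduces to
\begin{align*}
\overline{\chi}(F) = 2\mu \lambda\, \varphi(\lambda) \biggl[\int_0^\infty v \bar\Phi(v)^2\, dv - \int_0^\infty \bar\Phi(v) \varphi(v)\, dv\biggr] = -\frac{\mu \lambda}{(2\pi)^{3/2}} e^{-\lambda^2/2},
\end{align*}
via the evaluations $\int_0^\infty \bar\Phi(v) \varphi(v)\, dv = 1/8$ and $\int_0^\infty v \bar\Phi(v)^2\, dv = 1/8 - 1/(4\pi)$ (obtained by Fubini and standard Gaussian integration by parts). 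Substitution into \eqref{eq:stationary-expectation-EC} gives \eqref{eq:gauss-exp-EC}.

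The main obstacle is rigorously carrying out the second-order expansion under bare $\C^{1,1}$ regularity. Uniform integrability for the $\varepsilon^{-2}$-scaled expectation is secured by the moment hypothesis $\E\Lip(\partial_i f)^{4+\eta} < \infty$ through singular-integral estimates of the type established in the proof of Theorem \ref{th:conditions-random-excursion}, which control the integrand near the singular level set $\{f(0) = \lambda\}$. Crucially, no pointwise Hessian of $f$ is invoked and only the second spectral moment of $f$ enters, matching the paper's relaxed-regularity program.
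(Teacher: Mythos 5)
Your proposal follows the paper's overall strategy (verify Theorem \ref{th:gauss-general}, then feed the three densities into Theorem \ref{th:stationary-fields}), but the computation of $\overline{\chi}(F)$ is carried out by a genuinely different route. The paper evaluates the trivariate Gaussian probability directly: it expands the inverse covariance matrix $M^{-1}$ of $(f(0),f(\varepsilon\u_1),f(\varepsilon\u_2))$ around its singular limit, isolates the part odd in $\Lambda=(\lambda,\lambda,\lambda)$, and reduces to a three-dimensional integral computed by an explicit change of variables. You instead condition on $f(0)=\phi$, exploit the near-degeneracy of the conditional law of $(f(\varepsilon\u_1),f(\varepsilon\u_2))$, and reduce to one-dimensional integrals of $\bar\Phi^2$ and $\bar\Phi\varphi$. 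Your evaluations $\int_0^\infty \bar\Phi\varphi=1/8$ and $\int_0^\infty v\bar\Phi(v)^2\,dv=1/8-1/(4\pi)$ are correct and do yield $-\mu\lambda(2\pi)^{-3/2}e^{-\lambda^2/2}$, matching \eqref{eq:intermed-EC-gauss}. Your conditioning argument is arguably more transparent about \emph{why} only $\mu$ enters; the paper's matrix computation is more mechanical but keeps all error terms visible in one place. (Minor points: the perimeter density carries a convention-dependent factor of $2$ between $\lim\varepsilon^{-1}p_\varepsilon$ and $\overline{\Per_{\u_i}}$ that you should track against \eqref{eq:volumic-perimeter-gauss}; and no fresh uniform-integrability work is needed for $\overline\chi$ --- Theorem \ref{th:stationary-fields} already guarantees the limit exists, so you only need to identify the limit of the deterministic trivariate probability.)

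The one step that is genuinely too quick is the treatment of the conditional correlation $\rho_\varepsilon$ of $(f(\varepsilon\u_1),f(\varepsilon\u_2))$ given $f(0)$. You dismiss it with ``$\rho_\varepsilon\to0$'' and then use $\bar\Phi(\cdot)^2$ as if the pair were conditionally independent. But the quantity you are extracting is the order-$\varepsilon^2$ part of an order-$\varepsilon$ expression, and under bare $\mathcal{C}^{1,1}$ regularity one only gets $r(\varepsilon)=1-\mu\varepsilon^2/2+O(\varepsilon^3)$, hence $\rho_\varepsilon=\bigl(r(\sqrt2\varepsilon)-r(\varepsilon)^2\bigr)/\bigl(1-r(\varepsilon)^2\bigr)=O(\varepsilon)$, not $o(\varepsilon)$. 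The replacement of the correlated bivariate tail by $\bar\Phi^2$ therefore introduces an error of order $\sigma_\varepsilon\rho_\varepsilon=O(\varepsilon^2)$ --- exactly the order of the answer. The argument can be repaired: by Plackett's identity the leading $\rho_\varepsilon$-correction is $\sigma_\varepsilon\rho_\varepsilon\varphi(\lambda)\int_0^\infty\phi_2(u,u;0)\,du+O(\sigma_\varepsilon^2\rho_\varepsilon)$, which is even under $\lambda\mapsto-\lambda$ (equivalently, identical for the two terms $\E\delta^\varepsilon(0,f,\lambda)$ and $\E\delta^{-\varepsilon}(0,-f,-\lambda)$ to that order) and hence cancels in the difference, leaving $O(\varepsilon^3)$. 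This cancellation must be stated and proved; as written, your expansion is not justified at the order at which you read off the answer. The paper's matrix formulation avoids the issue because the correction matrices $W_\varepsilon^*$ are carried along explicitly and the surviving term is isolated as the part linear in $\Lambda$.
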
  
\begin{remark} 
If $W$ is a square, the relation \eqref{eq:gauss-exp-EC} coincides with \cite[(11.7.14)]{AdlTay07}.
\end{remark}

\begin{proof} { 
\eqref{eq:stationary-expectation-Vol} immediately yields \eqref{eq:gauss-exp-volume}. }
To prove \eqref{eq:gauss-exp-EC}, first remark that the stationarity of the field and the fact that it is not constant a.s. entail that $(f(0),\partial _{1}f(0),\partial _{2}f(0))\equlaw (f(x),\partial _{1}f(x),\partial _{2}f(x)),x\in \mathbb{R}^{2}$ is non-degenerate.
Let us show
\begin{align}
\label{eq:intermed-EC-gauss}
\overline{\chi }(F)=\lim_{\varepsilon \to 0}\varepsilon ^{-2}\left[\P \left(
\bar \delta^{\varepsilon }(0,f,\lambda )\right)-\P\left(\bar \delta^{-\varepsilon }(0,-f,-\lambda )\right)
\right]= \frac{\mu \lambda \exp(-\lambda ^{2}/2)}{(2\pi )^{3/2}}.
\end{align}
 
Fix $\varepsilon >0$.
Let $M_{\varepsilon }$ be the $3\times 3$ covariance matrix of $(f(0),f(\varepsilon \u_{1}),f(\varepsilon \u_{2}))$. Since $\text{\rm{Cov}}(f(0),f(\varepsilon \u_{i}))=1-\mu \varepsilon ^{2}/2+O(\varepsilon ^{4}),\text{\rm{Cov}}(f(\varepsilon \u_{1}),f(\varepsilon \u_{2}))=1-\mu \varepsilon ^{2}+O(\varepsilon ^{4})$, straightforward computations show that    
$\det(M_{\varepsilon }) = \varepsilon ^{4}\mu  ^{2}+o(\varepsilon ^{4})$,
 and
\begin{align}
\label{eq:Minverse}
M_{\varepsilon }^{-1}= \frac{1}{ \det(M_{\varepsilon })}\left(  \varepsilon ^{2} W_{\varepsilon } + {\varepsilon ^{4}}  D_{\varepsilon } \right) ,
\end{align}   where the sum of each line and each column of $W_{\varepsilon }$ is $0$, for $\varepsilon >0$, and as $\varepsilon \to 0$
\begin{align*} W_{\varepsilon }\to W:= \mu \left( 
\begin{array}{ccc}
2 & -1 & -1\\
-1 & 1 & 0\\
-1 & 0 & 1\\
\end{array} \right),
D_{\varepsilon }\to D:=\frac{\mu ^{2}}{4} \left( 
\begin{array}{ccc}
-4 & 2 & 2\\
2 & -1 & 1\\
2 & 1 & -1\\ 
\end{array} \right) .
\end{align*} Denote by $\mathbf{1}$ the row vector $(1,1,1)$, and let $\Lambda =\lambda \mathbf{1}$, $Q=\{(t,s,z):t\geqslant 0,s<0,z<0\}$. Denote by $A'$ the transpose of a matrix (or a vector) $A$. We have 
\begin{align*}
\P (\bar \delta^{\varepsilon }(0,f,\lambda ))&=\frac{1}{\sqrt{(2\pi )^{3}\det(M_{\varepsilon })}}\int_{Q+\Lambda }\exp\left( -\frac{1}{2}(t,s,z)'M_{\varepsilon }^{-1}(t,s,z) \right)dtdsdz
\end{align*}and by isotropy and symmetry, for $\lambda \in \mathbb{R},$
\begin{align*}
\P(\bar \delta^{-\varepsilon }(0,-f,-\lambda ))&=\P(\bar \delta^{\varepsilon }(0,-f,-\lambda ))=\P( \bar \delta^{\varepsilon }(0,f,-\lambda) ).
\end{align*} Therefore, \eqref{eq:stationary-expectation-EC} yields that 
 $
\overline{\chi  }(F)=\lim_{\varepsilon \to 0}\varepsilon ^{-2}\left(
\P(\bar \delta^{\varepsilon }(0,f,\lambda) )-\P (\bar \delta^{\varepsilon }(0,f,-\lambda) )
\right).
 $ Let $ X=(t,s,z)\in Q,  Y= {\frac{ \varepsilon  }{\sqrt{\det(M_{\varepsilon })}}}X$.  Since $\Lambda W_{\varepsilon }$ and $W_{\varepsilon }\Lambda $ are $0$, we have  
\begin{align*}
(X+\Lambda )' M_{\varepsilon }^{-1}(X+\Lambda ) 
&=\underbrace{Y'(W_{\varepsilon } +\varepsilon ^{2} D_{\varepsilon })Y}_{=:\gamma _{\varepsilon }(Y)}+\frac{2\varepsilon ^3}{\sqrt{\det(M_{\varepsilon })}}Y'D_{\varepsilon }\Lambda +\frac{\varepsilon ^{4}}{\det(M_{\varepsilon })}\Lambda 'D_{\varepsilon }\Lambda \\
\P(\bar \delta^{\varepsilon }(0,f,\lambda ))&= \frac{\left(
\frac{\sqrt{\det(M_{\varepsilon })}}{\varepsilon }
\right)^{3}\exp\left(
-\lambda ^{2}\frac{\varepsilon ^{4}}{2\det(M_{\varepsilon })}
\mathbf{1}'D_{\varepsilon }\mathbf{1}\right)}{\sqrt{(2\pi )^{3}\det(M_{\varepsilon })}} \int_{Q }\exp\left(-\frac{1}{2}\gamma _{\varepsilon }(Y)-\varepsilon ^{3}\lambda \frac{Y'D_{\varepsilon }\mathbf{1} }{\sqrt{\det(M\varepsilon )}}\right)dY
%&=\frac{\det(M_{\varepsilon })}{\varepsilon ^{3}\sqrt{(2\pi )^{3}}}\exp(-\lambda ^{2}/2(1+o(1)))\int_{Q}\exp(-\gamma _{\varepsilon }(Y)/2)\exp(-\frac{\varepsilon ^{3}}{\sqrt{\det(M_{\varepsilon })}}Y'D_{\varepsilon }\Lambda )dY
\end{align*}
and, for some $\theta =\theta (\varepsilon ,Y,\lambda )\in [-\varepsilon ^{3}\det(M_{\varepsilon })^{-1/2},\varepsilon ^{3}\det(M_{\varepsilon })^{-1/2}]$, 
\begin{align*}
\exp\left(-
\frac{\varepsilon ^{3}\lambda Y'D_{\varepsilon }\mathbf{1}}{\sqrt{\det(M_{\varepsilon })}}
\right)-\exp\left(
\frac{\varepsilon ^{3}\lambda Y'D_{\varepsilon } \mathbf{1} }{\sqrt{\det(M_{\varepsilon })}}
\right)=&-2\frac{\varepsilon ^{3}\lambda Y'D_{\varepsilon }\mathbf{1}  }{\sqrt{\det(M_{\varepsilon })}}\exp(\theta \lambda \mathbf{1}  'D_{\varepsilon }Y).
\end{align*}Therefore, as $\varepsilon \to 0,$
$
\varepsilon ^{-2}( \P(\bar \delta^{\varepsilon } (0,f,\lambda ))-\P(\bar \delta^{\varepsilon }(0,f,-\lambda )))$ is equivalent to
\begin{align}\notag& \varepsilon ^{-2}\frac{\exp(-\lambda ^{2}/2)\det(M_{\varepsilon })}{\varepsilon ^{3}\sqrt{(2\pi )^{3}}}\int_{Q}\exp(-\gamma _{\varepsilon }(Y)/2)\frac{-2\varepsilon ^{3}Y'D_{\varepsilon }\lambda \mathbf{1}  }{\sqrt{\det(M_{\varepsilon })}}\exp\left(
 \theta Y'D_{\varepsilon }\lambda \mathbf{1}  
\right)dY\\
\label{eq:expression-EC-I}&\sim \frac{-\exp(-\lambda ^{2}/2)\mu }{\sqrt{2\pi ^{3}}}\int_{Q}\exp(-\gamma _{\varepsilon }(Y)/2)Y'D_{\varepsilon }\lambda \mathbf{1}  \exp(\theta Y'D_{\varepsilon }\lambda \mathbf{1}  )dY.
\end{align}
For $Y=(x,y,z)\in Q$, we have 
\begin{align*}
\frac{Y'WY}{\mu }  =2x^{2}+y^{2}+z^{2}-2xy-2xz=&2x^{2}+y^{2}+z^{2}+2 | xy | +2 | xz | \geqslant \|Y\|^{2}.
\end{align*}   Since $W_{\varepsilon } +\varepsilon ^{2} D_{\varepsilon }\to W$ as $\varepsilon \to 0$, $\gamma _{\varepsilon }(Y)\geqslant \mu   \|Y\|^{2}/2$ for $\varepsilon $ sufficiently small, uniformly in $Y\in Q$. This yields a clear majoring bound and Lebesgue's theorem gives  $\overline{\chi }(F)=-\mu  \exp(-\lambda ^{2}/2)I(2\pi ^{3})^{-1/2}$ with $I=\int_{Q}\exp(-\frac{1}{2}Y'WY)Y'D\Lambda dY=2\lambda J$ where
\begin{align*}
J&=\int_{Q}\exp(-(2t^{2}+s^{2}+z^{2}-2ts-2tz))(s+z)dtdsdz=-1/4
\end{align*} with the change of variables
$ 
u=t-s,
v =t-z, w=t.
 $  The statement \eqref{eq:intermed-EC-gauss} is therefore proved.  The computation of $\overline{\Per_{\infty }}(F)$ is similar and simpler and is omitted here.
\end{proof}

\section{Proofs}

\label{sec:proofs} 

{
\subsection{Proof of Theorem \ref{th:bound-pixel-comp-level-set}}

{\bf (i)} Assume without loss of generality $\lambda =0$ in  the proof. Recall that $\Gamma (F\cap W)$ is the collection of bounded connected components of $F\cap W$. For $0\leqslant k\leqslant d$, {denote by} $\Gamma _{k}(F\cap W)$ the elements of  ${\Gamma (F\cap W)}$ that hit $\partial _{k}W$, and define recursively $\Gamma _{k}^{+}(F;W)=\Gamma _{k}(F\cap W)\setminus \Gamma _{k-1}^{+}(F;W), 1\leqslant k\leqslant d$ (with $\Gamma _{0}^{+}(F;W):=F\cap \text{\rm{corners}}(W)$).

\newcommand{\diam}{\text{\rm{diam}}}
Let $1\leqslant k\leqslant d, C\in \Gamma_{k}^{+}(F;W) $,   $C'$ arbitrarily chosen in $ \Gamma (C\cap \partial _{k}W)$. Since $C'$ does not touch $\partial _{k-1}W$, it is included in the relative interior of $\partial _{k}W$ within the affine $k$-dimensional tangent space to ${\partial}W$ that contains $C'$, hence it is contained in some facet $G$. Let $I\in \mathcal{I}_{ k}$ such that for $x\in G,I_{x}(W)=I$.  
Let $x_{C}\in \text{\rm{cl}}(C')$ be such that $f(x_{C})=\sup_{C'}f$. Since the  $k$-dimensional gradient $\nabla (f_{ | G})$ does not vanish on $\partial C'$,  $f(x_{C})>0$ and
the Lagrange multipliers Theorem yields that  $\partial _{i }f(x_{C})=0$ for $i\in I$.
 Call $r_{C}$ the maximal radius such that $B_{C}:=(B(x_{C},r_{C})\cap G)\subset C'$.   Since $  B _{C}$ touches $\partial F$,   $f$ has a zero on $B_{C}$. It follows that $ | f(x) | \leqslant 2\Lip(f)r_{C}$ and $ | \partial _{i}f(x) | \leqslant \Lip(\partial _{i}f )r_{C}$ for $x\in B_{C},i\in I $. 
 %Call $I'$ the set $I$ from which have been removed indexes $i\in I$ such that $\Lip(\partial _{i}f)=0$, and hence $\partial _{i}f=0$ on $B_{C}$.  
 Define  $$M(x)=\max \left(
\frac{| f(x) |}{ 2\Lip(f )}  ,\frac{ | \partial _{i}f(x) |}{\Lip(\partial _{i}f )} ,i\in I
\right)\in \mathbb{R}_{+} ,x\in \partial _{k}W.$$ Since $M(x)\leqslant r_{C}$ on $B_{C}$, we have
\begin{align*}
1=&\frac{1}{\mathcal{H}_{d}^{k}(B_{C})}\int_{B_{C}}\mathbf{1}_{\{M(x)\leqslant r_{C} \}}\mathcal{H}_{d}^{k}(dx)=  {\kappa _{k}^{-1}r_{C}^{-k}} \int_{B_{C}}\mathbf{1}_{\{r_{C} ^{-1}\leqslant M(x)^{-1}\}}\mathcal{H}_{d}^{k}(dx)\\
&\leqslant \kappa _{k}^{-1}\int_{B_{C}}M(x)^{- k}\mathcal{H}_{d}^{k}(dx).
\end{align*}Since the $B_{C}$ are pairwise disjoint, summing over all the $C\in \Gamma _{k}^{+}(F;W)$ and  $k\in \{1,\dots ,d\}$ gives
\begin{align*}
\#\Gamma (F\cap W)\leqslant  \sum_{k=0}^{d}\Gamma _{k}^{+}(F;W) \leqslant \sum_{k=0}^{d}\sum_{C\in \Gamma _{k}^{+}(F;W)}\kappa _{k}^{-1}\int_{B_{C}}M(x)^{-k}\mathcal{H}_{d}^{k}(dx) \leqslant \sum_{k=0}^{d}\kappa _{k}^{-1}\int_{\partial _{k}W}M(x)^{-k}\mathcal{H}_{d}^{k}(dx).
\end{align*}  
with $M (x)^{-0}= 1$ by convention on $\text{\rm{corners}}(W)$. The result follows by noticing that 
\begin{align*}
M(x)\geqslant  \min\left(
\frac{1}{2\Lip(f)},\frac{1}{\Lip(\partial _{i}f)},i\in I
\right){\max( | f(x) | , | \partial _{i}f(x) | ,i\in I) }\geqslant \frac{\max( | f(x) | , | \partial _{i}f(x) | ,i\in I)}{2\max( \Lip(f),\Lip(\partial _{i}f),i\in I)}.
\end{align*} 
{ {\bf (ii)}
Theorem 2.12 in the companion paper \cite{LacEC1}, in the context $d=2$, features a bound on $\chi ((F\cap W)^{\varepsilon })$ in terms of the number of occurrences of local configurations called \textit{entanglement points of $F$}. Roughly speaking, an entanglement point 
occurs when two close points of $F$ are connected  by a tight path in $F$. 
As a consequence, if $F$ is sampled with an insufficiently high resolution in this region, the connecting path is not detected, and $F$ looks locally disconnected.
For formal definitions, for $x,y\in \varepsilon \mathbb{Z} ^{2}$ at distance $\varepsilon $, introduce $\p_{x,y}$ the closed  square with side-length $\varepsilon $ such that   $x$ and $y$ are the midpoints of two opposite sides. Let $\p_{x,y}'=\partial \p_{x,y}\setminus \{x,y\}$, which has two connected components. Then $\{x,y\}$ is an \textit{entanglement pair of points} of $F$ if $x,y\notin F$ and   $(\p'_{x,y}\cup F)\cap \p_{x,y}$ is connected.  
We call $\N_{\varepsilon }(F)$ the family of such pairs of points.  See Figure \ref{fig:entang} for an example.

\pdfoutput=1
\begin{figure} [h!]
\centering
\caption{\label{fig:entang}\textit{Entanglement point:} In this example, $\{x,y\}\in \N_{\varepsilon }(F)$ because the two connected components of $\p'_{x,y}$, in  grey, are connected through $\gamma \subseteq (F\cap \p_{x,y})$. We have $\{x,y\}\notin  \N_{\varepsilon }(F')$.}
\includegraphics[scale=.2]{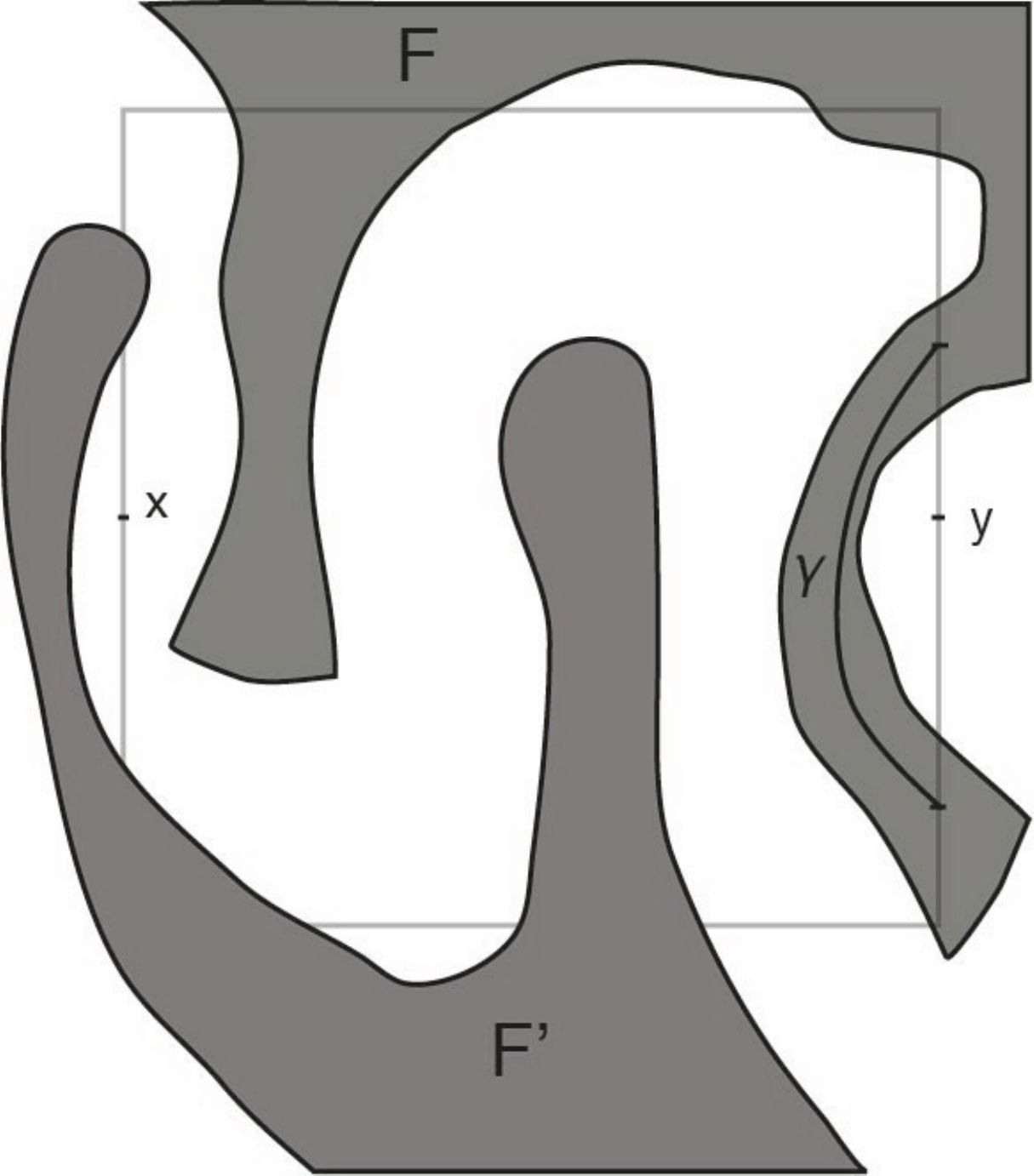}
\end{figure}
Let $\llparenthesis x,y \rrparenthesis=\varepsilon \mathbb{Z} ^{2}\cap [x,y]\setminus \{x,y\}, $ for $x,y\in \varepsilon \mathbb{Z} ^{2} $. For $A\subset \mathbb{R}^{2}$, note $A^{\oplus \varepsilon }=\{x\in \mathbb{R}^{d}:d(x,A)\leqslant \varepsilon \}$.
 To account for boundary effects, we also consider grid points $x,y \in   \varepsilon \mathbb{Z} ^{2}\cap  W\cap F$, on the same line or column of $\varepsilon \mathbb{Z} ^{2}$,  such that \begin{itemize}
\item $x,y$ are within distance $\varepsilon $ from  one of the  edges of $W$ (the same edge for $x$ and $y$)  
\item $\llparenthesis x,y\rrparenthesis \neq \emptyset $
\item $\llparenthesis x,y\rrparenthesis \subseteq \varepsilon \mathbb{Z} ^{2}\cap F^{c}\cap F^{\oplus \varepsilon }$.
\end{itemize}  The family of such pairs of points $\{x,y\}$ is denoted by $\N_{\varepsilon }'(F;W)$ .
 It is proved in \cite[Theorem 2.12]{LacEC1} that  \begin{align}
\label{eq:bound-Gamma-eps-W}
  \#\Gamma ((F\cap W)^{\varepsilon }) &  \leqslant   2\#(\N_{\varepsilon }(F)\cap  W^{\oplus \varepsilon })+2\#\N_{\varepsilon }'(F,W) + \#\Gamma (F\cap W)+2\#\corners(W ) .
  \end{align}
It therefore only remains to bound $\#(\N_{\varepsilon }(F)\cap W^{\oplus \varepsilon })$ and $\#\N'_{\varepsilon }(F,W)$ to achieve \eqref{eq:bound-gamma-level-set-pixel}. For $m\geqslant 1$ and a function $g:A\subseteq \mathbb{R}^{m}\to \mathbb{R} $, introduce the continuity modulus
\begin{align*}
\omega (g ,A)=\sup_{x\neq y\in A  }{|g(x)-g(y)|} .
\end{align*}  
 The bound will follow from the following lemma.  \begin{lemma}
\label{lm:bound-Ne-levelset}\textbf{(i)}
For $\{x,y\}\in \N_{\varepsilon }(F )$, we have for some $i\in \{1,2\}$, and $i'=i+1 \text{\rm{ mod }}2$,
\begin{align*}
 | f(x) |&\leqslant \omega (f,[x,y])\leqslant \Lip(f)\varepsilon \\
  | \partial_i f(x) |&\leqslant \omega (\partial_i f,[x,y])\leqslant \Lip(\partial_i f)\varepsilon \\
   | \partial_{i'} f(x) |&\leqslant 2\omega (\partial_i f,\p_{x,y})+\omega (\partial_{i'} f,\p_{x,y}) \leqslant \sqrt{2}\varepsilon (2\Lip(\partial_i f)+\Lip(\partial_{i'} f)),
\end{align*}and idem for $y$.

\textbf{(ii)} For $x,y\in \N_{\varepsilon }'(F,W)$, there is $z=z(x,y)\in \llparenthesis x,y\rrparenthesis$, $i\in \{1,2\}$, such that 
\begin{align*}
 | f(z) |&\leqslant \Lip(f)\varepsilon\\ 
  | \partial _{i}f(z) |& \leqslant  \Lip(\partial _{i}f)\varepsilon  .
\end{align*}\end{lemma} 
The lemma is proved later for convenience.
To obtain the integral upper bounds from \eqref{eq:bound-gamma-level-set-pixel}, note that there is $c>0$ such that for $\varepsilon >0$ sufficiently small, for every  $x,y\in W$, neighbours in $\varepsilon \mathbb{Z} ^{2}$, $\Vol ^2 ((B(x,\varepsilon )\cup B(y,\varepsilon) )\cap W)\geqslant \varepsilon ^{2}/c$. Define the possibly infinite quantity, for $z\in W,$
\begin{align*}M(z)=&\max\left(
\frac{ | f(z) |}{   2\Lip(f)  },  \frac{ | \partial _{i}f(z) | }{  2\Lip(\partial _{i}f)} ,\frac{ | \partial _{i'}f(z) | }{ 2\sqrt{2}(\Lip(\partial_1 f)+\Lip(\partial_2 f))}
\right).
%\\\leqslant & \frac{4\sqrt{2}\max(\Lip(f), \Lip( \partial _{i}f ) ,i=1,2) }{\max( | f(z) | , | \partial _{i}f(z) | ,i=1,2)}.
\end{align*}
%treating undetermined cases $0/0$ can be done like in the proof of {\bf (i)}.
Lemma 
\ref{lm:bound-Ne-levelset} yields that for $\{x,y\}\in \N_{\varepsilon }(F)$ and $z\in B(x,\varepsilon )\cup B(y,\varepsilon )$, $M(z)\leqslant \varepsilon $.
  Then
\begin{align*}
\#\N_{\varepsilon }(F)&
\leqslant 
\sum_{\{x,y\}\in \N_{\varepsilon }(F)}
 \mathbf{1}_{\{  \forall z\in B(x,\varepsilon )\cup B(y,\varepsilon )\cap W,
M(z)\leqslant \varepsilon  
  \}}\\
&\leqslant
 \sum_{\{x,y\}\in \N_{\varepsilon }(F)} 
 \int\limits_{(B(x,\varepsilon )\cup B(y,\varepsilon ))\cap W} 
 {c}{  \varepsilon ^{-2}} \mathbf{1}_{\{\varepsilon ^{-1}\leqslant 
  M(z)^{-1}  
      \}}dz\\
&\leqslant 
4c \int_{  W}M(z)^{-2}dz\leqslant c'I_{2}(f;W),\\
\end{align*} for some $c'>0$, because for every $z\in W$ there are at most 4 couples $\{x,y\}\in \N_{\varepsilon }(F)$ such that $z\in B(x,\varepsilon )\cup B(y,\varepsilon )$.  

Now, given $w\in \partial W$, there can be at most $3$ pairs $\{x,y\}\in \N_{\varepsilon }'(F)$ such that $w$ is on the closest edge of $W$ parallel to $[x,y]$ and $z=z(x,y)$ (defined in Lemma \ref{lm:bound-Ne-levelset})  is within distance $3\varepsilon $ from $w$, and in this case $ | f(w) | \leqslant 4\Lip(f)\varepsilon $ and $ | \partial _{i}f(w) | \leqslant 4\Lip(\partial _{i}f)\varepsilon $ for some $i\in \{1,2\}$. We have $\mathcal{H}^{1}_{2}(B(z,3\varepsilon )\cap \partial W)\geqslant \varepsilon $, because $z$ is within distance $2\varepsilon $ from a segment of $\partial W$ parallel to $[x,y]$. It follows that, with $M_{i}(w)=\max( | f(w) | /4\Lip(f) , | \partial _{i}f(w) | /4\Lip(\partial _{i}f))$
\begin{align*}
\#\N_{\varepsilon }'(F,W)
&\leqslant 
\sum_{\{x,y\}\in \N_{\varepsilon }'(F,W)}\sum_{i=1}^{2}
\mathbf{1}_{\{ \forall w\in B(z,3\varepsilon )\cap \partial W,
M_{i}(w)\leqslant \varepsilon     \}}\\
&\leqslant
\sum_{i=1}^{2} \sum_{\{x,y\}\in \N_{\varepsilon }'(F)}
\frac{1}{\varepsilon }\int_{\partial W\cap B(z,3\varepsilon )}
\mathbf{1}_{\{ 
M_{i}(w)^{-1}\geqslant \varepsilon ^{-1}\}}\mathcal{H}_{2}^{1}(dw)\\
&\leqslant 
\sum_{i=1}^{2}
 {3} \int_{\partial W}
M_{i}(w) ^{-1}
    \mathcal{H}_{2}^{1}(dw)\leqslant 24I_{1}(f;W).
\end{align*} 
}

\begin{proof}[Proof of Lemma \ref{lm:bound-Ne-levelset}] 
 For $x\in \mathbb{R}^{2}$,  denote by $(x_{[1]},x_{[2]})$  its coordinates in the canonical basis, not to be  mistaken with a pair of vector of $\mathbb{R}^{2}$, denoted by $(x_{1},x_{2})$. If $\varphi $ is {a mapping} with values in $\mathbb{R}^{2}$, denote its coordinates by $(\varphi (\cdot )_{[1]} ,\varphi (\cdot )_{[2]})$.

\textbf{(i)}
Let $x,y\in \N_{\varepsilon }(F)$.
The definition of $\N_{\varepsilon }(F)$ yields a connected path $\gamma \subseteq (F\cap \p_{x,y})$ going through some $z\in [x,y]$ and connecting the two connected components of $\p'_{x,y}$. Since $f(x)\geqslant 0$ and $f(z)\leqslant 0$,   there is a point $z'$ of $[x,y] $ satisfying $f(z')=0 $, hence $ | f(x) | \leqslant \omega (f,[x,y])$. Note for later that for $t\in \p_{x,y}$  $ | f(t)  | \leqslant\omega (f,\p_{x,y})  \leqslant \Lip(f)\sqrt{2}\varepsilon  $.

 We assume without loss of generality  that $[x,y]$ is horizontal. Let $[z',z'']$ be the (also horizontal) connected component of $F\cap [x,y]$ containing $z$. After choosing a direction on $[x,y]$, $z'$ and $z''$ are entry and exit points for $F$, and their normal vectors $\n_{F}(z'),\n_{F}(z'')$  point towards the outside of $F$. Therefore they satisfy  $\n_{F}(z')_{[1]}\n_{F}(z'')_{[1]}\leqslant  0$, and so $\partial_1 f(z')\partial_1 f(z'')\leqslant  0$. This gives us by continuity the existence of a point $w\in [x,y]$ such that $ 0=\partial_1 f(w)$, whence $ |  \partial _{1}f(x) | \leqslant \omega (\partial _{1}f,[x,y])$. Note for later that $ | \partial_1 f(t)| \leqslant \omega (\partial_1 f,\p_{x,y})   $ on $\p_{x,y}$. If $[x,y]$ is vertical, $\partial_2 f$ verifies the inequality instead. Let us keep assuming that $[x,y]$ is horizontal for the sequel of the proof. 
 
  We claim that $ | \partial_2 f(x)  | \leqslant  2\omega (\partial_1 f, \p_{x,y})+\omega (\partial_2 f, \p_{x,y} )  $, and consider two cases to prove it.

\begin{itemize}
\item \underline{First case}   $\partial_2 f(z')\partial_2 f(z'')\leqslant  0$, and by continuity we have $w\in [x,y]$ such that $0= \partial_2 f(w)  $, whence $ | \partial_2 f(\cdot ) |\leqslant  \omega (\partial_2 f, \p_{x,y}) $ on the whole pixel $\p_{x,y}$. The desired inequality follows.
\item \underline{Second case}   $\partial_2 f(z')>0,\partial_2 f(z'')>0$ (equivalent treatment if they are both $<0$).  Assume for instance that $z'$ is the leftmost point, and that $ | \partial_2 f(x) | >2\omega (\partial_1 f ,\p_{x,y})+\omega (\partial_2 f ,\p_{x,y} ) $, otherwise the claim is proved. It implies in particular that $ | \partial_2 f(\cdot ) |>2 \omega (\partial_1 f, \p_{x,y}) $  on the whole pixel $\p_{x,y}$.  
Since  $ | \partial _{1}f(\cdot ) | \leqslant \omega (\partial _{1}f,\p_{x,y})$ on $\p_{x,y}$, the implicit function theorem yields a  unique function $\varphi$ (resp. $\psi $)$  :[z'_{[1]},z_{[1]}'']\to \mathbb{R}$ such that $\varphi (z_{[1]}')=z_{[2]}'$ (resp. $\psi (z_{[1]}'')=z_{2}''=z_{[2]}'$), $ | \varphi ' | \leqslant 1/2$, (resp. $| \psi ' | \leqslant 1/2)$,  $\varphi ([z_{[1]}',z_{[1]}''])\subset (z'_{[2]}+(-\varepsilon /2,\varepsilon /2))$, (resp. $\psi ([z_{[1]}',z_{[1]}''])\subset (z'_{[2]}+(-\varepsilon /2,\varepsilon /2))$) and the graph of $\varphi $ (resp. $\psi $) coincides with $\partial {F}\cap ([z_{[1]}',z_{[1]}'']\times (z'_{[2]}+[-\varepsilon /2,\varepsilon /2])$. In particular, $\varphi =\psi $, and its graph cannot touch the upper half of $\partial \p_{x,y}$. Applying this to every maximal segment $[z',z'']\subset (F\cap [x,y])$, we see that every connected component of $F$ touching $[x,y]$, and hence $\gamma $, cannot meet the upper half of $\p_{x,y}$. In particular, it contradicts the definition of $\N_{\varepsilon }(F)$, whence indeed the assumption is proved by contradiction.
\end{itemize}
\textbf{(ii)}Let now $\{x,y\}$ be an element of $\N_{\varepsilon }'(f,W)$.  We know that $\llparenthesis x,y\rrparenthesis \cap F^{c}\neq \emptyset $. Let $[z',z'']\subset [x,y]$ be a connected component  of $F^{c}\cap [x,y]$. If $[z',z'']$ is, say, horizontal, since $\n_{F}(\cdot )_{[1]}$ changes sign between $z'$ and $z''$, so does $\partial_1 f$, and by continuity there is $w\in [z',z'']$ where $\partial_1 f(w)=0$. Calling $z$ the closest point from $w$ in $\llparenthesis x,y\rrparenthesis$, $\|z-w\|\leqslant \varepsilon $, and by definition of $\N_{\varepsilon }'(F,W)$, $z$ is also at distance $\varepsilon $ from $\partial F=\{f=0\}$. It follows that the result holds with $i=1:$ $ | \partial_1 f(z) |\leqslant \Lip(\partial_1 f)\varepsilon ,  | f(z) |\leqslant \Lip(f)\varepsilon  $. A similar argument holds for $i=2$ if $[z',z'']$ is vertical.
 \end{proof}

\subsection{Proof of Theorem \ref{th:conditions-random-excursion}}
Assume without loss of generality  $\lambda =0$. 
Let us prove that $f$ is a.s. regular within $W$ at level $0.$
For $0\leqslant k\leqslant d,I\in \mathcal{I}_{ k}$,  define 
\begin{align*}
 \theta _{k,I}=\{x\in  \partial _{k}W:f(x)=0,\partial _{i}f(x)=0,i\in I\}.
\end{align*}We must prove that $\theta _{k,I}=\emptyset $ a.s.. Define $M(y)=\max( | f(y) | /\Lip(f), | \partial _{i}f(y) | /\Lip(\partial _{i}f),i\in I)$.
%, with undetermined cases treated like in the proof of Theorem \ref{th:bound-pixel-comp-level-set}-(i)
For $x\in \theta _{k,I},y\in B(x,\varepsilon ) $, we have $M(y)\leqslant \varepsilon $. 
Since $W$ is a polyrectangle, there is $c_{W}>0$ such that the following holds for $\varepsilon >0$ sufficiently small: there is a partition $ \{C_{i}^{\varepsilon };1\leqslant i\leqslant n_{\varepsilon }\}$ of $\partial_{k}W$ such that for each $i,C_{i}^{\varepsilon }\subset B(x_{i}^{\varepsilon },\varepsilon/2 )$ for some $x_{i}^{\varepsilon }\in \partial _{k}W$, and $\mathcal{H}^{k}_{d}(C_{i}^{\varepsilon })\geqslant \varepsilon ^{k}/c_{W}$.  Then for any $\eta \geqslant  0,$
\begin{align*}
\#\theta _{k,I}=&\lim_{\varepsilon \to 0}\#\{1\leqslant i\leqslant n_{\varepsilon }:\theta _{k,I}\cap C_{i}^{\varepsilon }\neq \emptyset \}\\
=&\lim_{\varepsilon \to 0}\sum_{i=1}^{n_{\varepsilon }}\int_{C_{i}^{\varepsilon }}\mathbf{1}_{\{M(y)\leqslant \varepsilon \}}\frac{\mathcal{H}^{k}_{d}(dy)}{\mathcal{H}^{k}_{d}(C_{i}^{\varepsilon })}\\
\leqslant& \liminf_{\varepsilon \to 0}\frac{c_{W}}{\varepsilon ^{k}}\int_{\partial _{k}W}   \mathbf{1}_{\{M(y)\leqslant \varepsilon \}}\mathcal{H}^{k}_{d}(dy)\\
\leqslant &c_{W} \liminf_{\varepsilon \to 0}\varepsilon ^{\eta }\int_{ \partial _{k}W}M(y)^{-(k+\eta )}\mathcal{H}^{k}_{d}(dy).
\end{align*}
For any numbers $a_{1},\dots ,a_{q},b_{1},\dots ,b_{q}>0$, we have 
\begin{align*}
\max_{i}(a_{i}/b_{i})\geqslant \max_{i}(a_{i}\min_{j}(1/b_{j}))=\min_{j}(1/b_{j})\max_{i}(a_{i})=\frac{\max_{i}(a_{i})}{\max_{i}(b_{i})}.
\end{align*} 
Hence, with $L:=\max(\Lip(f),\Lip(\partial _{i}f),i\in I),m_{y}:=\max( | f(y) | , | \partial _{i}f(y) | ,i\in I),$ $M(y)\geqslant m_{y}/L$.
Fatou's lemma yields, 
\begin{align*}
\E[\#\theta _{k,I}]\leqslant &c_{W}\liminf_{\varepsilon \to 0}\varepsilon ^{\eta }\int_{\partial _{k}W}\E\left[
\frac{L^{k+\eta }}{m_{y}^{k+\eta }}
\right]\mathcal{H}^{k}_{d}(dy).
\end{align*}
Since $p>\frac{\alpha d}{\alpha -1}\geqslant \frac{\alpha k}{\alpha -1}$, we can choose $q\in ((1-k/p)^{-1},\alpha ) $. It satisfies $q>1$  and $q'k<p$, with $q'=(1-q^{-1})^{-1}$. In particular, if $\eta $ is chosen $>0$ sufficiently small, $C:=\E [L^{q'(k+\eta )}]<\infty $ for $1\leqslant k\leqslant d$.
Using H\"older's inequality,   $\mathbb{E}(\theta _{k,I})=0$ follows from the following bound, uniform in $y$:
\begin{align*}
\E [m_{y}^{-q(k+\eta )}]\leqslant &  \int_{\mathbb{R}_{+}}\P(m_{y}\leqslant t^{-1/(q(k+\eta ))})dt\\
\leqslant & \int_{\mathbb{R}_{+}} \P ( | f(y) | \leqslant t^{-1/(q(k+\eta ))}, | \partial _{i}f(y) | \leqslant t^{-1/(q(k+\eta ))},i\in I)dt\\
\leqslant &c\int_{\mathbb{R}_{+}}1\wedge t^{-\frac{\alpha k}{q(k+\eta )}}dt  .
\end{align*}
Assume without loss of generality that $\eta >0$ is chosen so that $q(k+\eta )<\alpha k $, so that indeed for all $1\leqslant k\leqslant d,$ the previous bound is finite (uniformly in $y$).

The proof that the $I_{k}(f;W)$ have finite expectation for $1\leqslant k\leqslant d$ is similar. We have 
\begin{align*}
\E [I_{k}(f;\lambda )]\leqslant (\E [L^{kq'}])^{\frac{1}{q'}}\int_{\partial _{k}W}(\E [m_{y}^{-kq}])^{\frac{1}{q}}\mathcal{H}^{k}_{d} (dy).
\end{align*}The finiteness follows by the exact same computation as before, with $\eta =0.$

Therefore, if $d=2$, using \eqref{eq:deterministic-expression-chi}, $\chi (F\cap W)=\lim_{\varepsilon \to 0}\chi ((F\cap W)^{\varepsilon })$  holds a.s. According to Theorem \ref{th:bound-pixel-comp-level-set}, the finiteness of $\E[ I_{1}(f;W)] $ and $\E[ I_{2}(f;W)] $  enable us to  apply Lebesgue's theorem and show that this limit can be passed to expectations, yielding \eqref{eq:expect-EC-excursion}-\eqref{eq:expect-EC-covariogram}. 

\subsection{Proof of Theorem \ref{th:stationary-fields} }
\label{sec:proof-4-2}
According to the previous proof, the quantities $I_{k}(f;\lambda ),0\leqslant k\leqslant 2,$ have finite expectation on any bounded $W\in \W_{2}$. Hence according to the proof of Theorem \ref{th:bound-pixel-comp-level-set},  the quantities $\E [\sup_{0<\varepsilon \leqslant 1}\#(\mathscr{N}_{\varepsilon }(F)\cap W)],$ $ \E[ \sup_{0<\varepsilon \leqslant 1}\#(\mathscr{N}_{\varepsilon }(F^{c})\cap W)]$, $\E[ \sup_{0<\varepsilon \leqslant 1}\#(\mathscr{N}_{\varepsilon }'(F,W))]$,  $\E [\sup_{0<\varepsilon \leqslant 1}\#(\mathscr{N}_{\varepsilon }'(F^{c},W))]$, $\E[\# \Gamma (F\cap W)]$, $\E [\#\Gamma (F^{c}\cap W)]$ are finite. Therefore Theorem \ref{th:stationary-fields} is a consequence of \cite[Proposition 3.1]{LacEC1}.
     
 \bibliographystyle{plain}
    \bibliography{../../../../bicovariograms}
    
 \end{document}